\documentclass[12pt,reqno]{amsart}
\usepackage{latexsym,amsmath,amsfonts,amssymb,amsthm}
\textwidth=420pt \evensidemargin=0pt \oddsidemargin=0pt
\def\pmod #1{\ ({\rm{mod}}\ #1)}
\def\Z{\mathbb Z}

\def\1{{\mathbf 1}}

\def\pmod #1{\ ({\rm{mod}}\ #1)}
\def\mod #1{\ \rm{mod}\ #1}

\def\floor #1{\left\lfloor{#1}\right\rfloor}

\theoremstyle{plain}
\newtheorem{Thm}{Theorem}
\newtheorem{Lem}{Lemma}

\theoremstyle{definition}
\newtheorem*{Ack}{Acknowledgment}
\theoremstyle{remark}

\pagestyle{plain}

\begin{document}

\title{$p$-adic supercongruences conjectured by Sun}
\author{Yong Zhang}
\email{yongzhang@njit.edu.cn}
\address{Department of Mathematics and Physics, Nanjing Institute of Technology,
Nanjing 211167, People's Republic of China}
\keywords{supercongruences; central binomial coefficients;  $p$-adic
valuations; Lucas sequence}
\subjclass[2010]{Primary 11A07, 11B65; Secondary 05A10, 11B39, 11B75.
}
\thanks{
The work is supported by National Natural Science Foundation of China (Grant No.11226277) and  Natural Science Foundation of  Nanjing Institute of Technology (No. CKJB201807).}
\begin{abstract}In this paper we prove three results conjectured by Z.-W. Sun.
Let $p$ be an odd prime and let $h\in \Z$ with $2h-1\equiv0\pmod{p^{}}$. For $a\in\Z^{+}$ and $p^a>3$, we show that
\begin{align}\notag
\sum_{k=0}^{p^a-1}\binom{hp^a-1}{k}\binom{2k}{k}\bigg(-\frac{h}{2}\bigg)^k\equiv0\pmod{p^{a+1}}.
\end{align}
Also, for any $n\in \Z^{+}$ we have 
\begin{align}
\notag
\nu_{p}\bigg(\sum_{k=0}^{n-1}\binom{hn-1}{k}\binom{2k}{k}\bigg(-\frac{h}{2}\bigg)^k\bigg)\geq\nu_{p}(n)\notag,
\end{align}
where  $\nu_p(n)$ denotes the $p$-adic order of $n$.
 For any integer $m\not\equiv 0\pmod{p^{}}$ and positive integer $n$, we have
\begin{align*}
\frac{1}{pn}\bigg(\sum_{k=0}^{pn-1}\binom{pn-1}{k}\frac{\binom{2k}{k}}{(-m)^k}-\bigg(\frac{m(m-4)}{p}\bigg)\sum_{k=0}^{n-1}\binom{n-1}{k}\frac{\binom{2k}{k}}{(-m)^k}\bigg)\in \Z_{p},
\end{align*}
where $(\frac{.}{})$ is the Legendre symbol and $\Z_p$ is the ring of $p$-adic integers.
\end{abstract}
\maketitle

\section{Introduction}
\setcounter{equation}{0}
\setcounter{Thm}{0}
\setcounter{Lem}{0}
\setcounter{Cor}{0}
\setcounter{Conj}{0}
Let $p$ be an odd prime. In 2006, Pan and Sun
\cite{PS} proved the congruence
\begin{align*}
\sum_{k=0}^{p-1}\binom{2k}{k+d}\equiv \bigg(\frac{p-d}{3}\bigg)\pmod{p^{}}\ {\rm for}\ d=0,\ldots,p
\end{align*}
via a curious combinatorial identity.
For any positive integer $a$, later Sun and Tauraso \cite{ZWSRT} established the following general result 
\begin{align*}\sum_{k=0}^{p^a-1}\binom{2k}{k}\equiv\bigg(\frac{p^a}{3}\bigg)\pmod{p^{2}}.
\end{align*}

 Let $A, B\in \Z$. The Lucas sequence  ${u_n=u_n(A, B)}\ ({n\geq 0})$ is given by $$u_0=0,\ \  u_1=1\ \  {\rm and}\ \ u_{n+1}=Au_n-Bu_{n-1}\ (n\geq 1).$$ If $p$ is an odd prime not dividing $B$, then it is known that $p\mid u_{p-(\frac{\Delta}{p})}$
(see, e.g., \cite{ZWS3}). For a non-zero integer $n$ and a prime $p$, let $\nu_p(n)$ denote the $p$-adic valuation (or $p$-adic order) of $n$, i.e., $\nu_p(n)$ is the largest integer such that $p^{\nu_p(n)}\mid n$, especially $\nu_p(0)=+\infty$ and we define $\nu_p(\frac{m}{n})=\nu_p(m)-\nu_p(n)$ for rational number $\frac{m}{n}$.

 In 2011, Sun \cite{ZWS} proved that, for any nonzero integer
$m$ and odd prime $p$ with $p\nmid m$, there holds
\begin{align}\label{j7k7}
\sum_{k=0}^{p-1}\frac{\binom{2k}{k}}{m^k}\equiv\bigg(\frac{\Delta}{p}\bigg)+u_{p-(\frac{\Delta}{p})}(m-2,1)\pmod{p^{2}},
\end{align}
where $\Delta=m(m-4)$.
As a common extension of (\ref{j7k7}), Sun \cite{ZWS17} showed that 
\begin{align}\label{777477}\frac{1}{pn}\bigg(\sum_{k=0}^{pn-1}\frac{\binom{2k}{k}}{m^k}-\bigg(\frac{\Delta}{p}\bigg)\sum_{k=0}^{n-1}\frac{\binom{2k}{k}}{m^k}\bigg)\in \Z_{p}
\end{align}
and furthermore
\begin{align}\label{77747}\frac{1}{n}\bigg(\sum_{k=0}^{pn-1}\frac{\binom{2k}{k}}{m^k}-\bigg(\frac{\Delta}{p}\bigg)\sum_{k=0}^{n-1}\frac{\binom{2k}{k}}{m^k}\bigg)\equiv \frac{\binom{2n}{n}}{2m^{n-1}}u_{p-(\frac{\Delta}{p})}(m-2,1)\pmod{p^{{2}
}}.
\end{align}

 Let $p$ be an odd prime and let $m$ be an integer with $p\nmid m.$ One can easily get the following
formulae:  
\begin{align}\label{7774777}\sum_{k=0}^{p-1}\binom{p-1}{k}(-1)^k\frac{\binom{2k}{k}}{m^k}\equiv\sum_{k=0}^{p-1}\frac{\binom{2k}{k}}{m^k}\pmod{p^{{}}}.
\end{align} 
It looks like the left-hand side of (\ref{7774777}) has some connection with  the  right-hand side.
Motivated by (\ref{j7k7}) and (\ref{7774777}), Sun \cite{ZWS2} determined the sum $\sum_{k=0}^{p^a-1}\binom{hp^a-1}{k}\binom{2k}{k}/m^k$ modulo $p^2$, where $h$ is a $p$-adic integer and $m\in \Z$ with $p\nmid m$. For example, if $h\not\equiv 0\pmod{p^{{}}}$ and ($(2h\not\equiv 1\pmod{p})$ or $p^a>3$), then 
\begin{align}\notag
&\sum_{k=0}^{p^a-1}\binom{hp^a-1}{k}\binom{2k}{k}\bigg(-\frac{h}{2}\bigg)^k\\\label{j7k7y7}&\equiv\bigg(\frac{1-2h}{p^a}\bigg)\bigg(1+h\bigg(\bigg(4-\frac{2}{h}\bigg)^{p-1}-1\bigg)\bigg)\pmod{p^{{2}}}.
\end{align}

It is natural to ask whether there exists the supercongruenc as (\ref{j7k7y7}) modulo the higher
powers of $p$
in the case $2h-1\equiv0\pmod{p^{}}$ and $p^a>3$. Sun \cite{ZWS2} managed to investigate the above case and made the following conjecture. The first aim of this paper is to prove the conjectured results.
\begin{Thm}
Let $p$ be an odd prime and let $h\in \Z$ with $2h-1\equiv0\pmod{p^{}}$. If $a\in\Z^{+}$ and $p^a>3$, then
\begin{align} \label{jiankun11}
\sum_{k=0}^{p^a-1}\binom{hp^a-1}{k}\binom{2k}{k}\bigg(-\frac{h}{2}\bigg)^k\equiv0\pmod{p^{a+1}}.
\end{align}
Also, for any $n\in \Z^{+}$ we have 
\begin{align}
\label{jiankun12}
\nu_{p}\bigg(\sum_{k=0}^{n-1}\binom{hn-1}{k}\binom{2k}{k}\bigg(-\frac{h}{2}\bigg)^k\bigg)\geq\nu_{p}(n).
\end{align}
\end{Thm}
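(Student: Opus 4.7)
\medskip
\noindent\textbf{Proof proposal.} My plan is to reduce both assertions to Sun's earlier congruences on the sums $R_N := \sum_{k=0}^{N-1}\binom{2k}{k}(h/2)^k$ via a combinatorial decomposition of $T(n)$, and then to establish the sharper statement~(\ref{jiankun11}) by exhibiting an explicit cancellation modulo $p^{a+1}$. Set $m:=2/h$, so that $\Delta:=m(m-4)=-4(2h-1)/h^2$ is divisible by $p$ under the hypothesis, giving $(\Delta/p)=0$. Using the Vandermonde-type expansion $\binom{hn-1}{k}=\sum_{j=0}^k(-1)^{k-j}\binom{hn}{j}$ (proved by induction via Pascal's rule), interchanging summations, and then applying the partial alternating-sum identity $\sum_{j=0}^{n-1}(-1)^j\binom{hn}{j}=(-1)^{n-1}\binom{hn-1}{n-1}$, I rewrite
\begin{align*}
T(n):=\sum_{k=0}^{n-1}\binom{hn-1}{k}\binom{2k}{k}\bigg(-\frac{h}{2}\bigg)^k=(-1)^{n-1}\binom{hn-1}{n-1}R_n-\sum_{j=1}^{n-1}(-1)^j\binom{hn}{j}R_j.
\end{align*}
Iterating~(\ref{777477}) with $(\Delta/p)=0$ yields $\nu_p(R_N)\geq\nu_p(N)$ for every positive integer $N$.

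The second assertion~(\ref{jiankun12}) will then follow immediately: from $\binom{hn}{j}=(hn/j)\binom{hn-1}{j-1}$ and $p\nmid h$ one has $\nu_p(\binom{hn}{j})\geq\nu_p(n)-\nu_p(j)$, which combined with $\nu_p(R_j)\geq\nu_p(j)$ shows that every term of the decomposition has $p$-adic valuation at least $\nu_p(n)$.

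For (\ref{jiankun11}), I take $n=p^a$ and apply the sharper congruence~(\ref{77747}) with $n\mapsto p^{a-1}$:
\begin{align*}
R_{p^a}\equiv p^{a-1}\cdot\frac{\binom{2p^{a-1}}{p^{a-1}}}{2m^{p^{a-1}-1}}u_p(m-2,1)\pmod{p^{a+1}}.
\end{align*}
In the degenerate case $p\mid\Delta$ the Lucas number satisfies $u_p(A,1)\equiv p(A/2)^{p-1}\pmod{p^2}$; since $(m-2)/2=(1-h)/h\equiv 1\pmod p$, Fermat gives $u_p\equiv p\pmod{p^2}$. Together with $\binom{hp^a-1}{p^a-1}\equiv 1\pmod{p^2}$ (via Wolstenholme's $\sum_{i=1}^{p-1}1/i\equiv 0\pmod p$ applied to $\prod_{i=1}^{p^a-1}(1-hp^a/i)$) and a Fermat-based evaluation of the other factors, the main term is $p^a$ times an explicit unit modulo $p^{a+1}$. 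For the remainder sum, I expand $\binom{hp^a}{j}$ and $R_j$ to complementary $p$-adic precisions; the key auxiliary input is $\sum_{j=1}^{p-1}(-1)^{j-1}\binom{-1/2}{j}\equiv 1\pmod p$, which follows from Lucas' theorem and the vanishing $(1-1)^{(p-1)/2}=0$. Verification in the base case $a=1$ ($p\geq 5$): the remainder reduces to $2hp\equiv p\pmod{p^2}$, exactly cancelling the main term, so $T(p)\equiv 0\pmod{p^2}$, in agreement with the Legendre-symbol vanishing in (\ref{j7k7y7}).

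The hard part will be pushing the remainder computation to modulus $p^{a+1}$ uniformly in $a\geq 2$: indices $j\in[1,p^a-1]$ with $\nu_p(j)>0$ demand $\binom{hp^a-1}{j-1}$ and $R_j$ to be expanded to complementary higher $p$-adic precisions, with the resulting terms combining---via the $p$-block decomposition $j=j_0+pj_1$ of the summation index and iterated applications of (\ref{77747}) to the inner sub-sums $R_{jp^b}$---to match the main term exactly modulo $p^{a+1}$. I expect this to succumb to induction on $a$; the hypothesis $p^a>3$ excludes the exceptional case $(p,a)=(3,1)$, where Wolstenholme's congruence degenerates to hold only modulo $p$.
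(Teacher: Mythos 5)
Your decomposition $T(n)=(-1)^{n-1}\binom{hn-1}{n-1}R_n-\sum_{j=1}^{n-1}(-1)^j\binom{hn}{j}R_j$ with $R_N=\sum_{k=0}^{N-1}\binom{2k}{k}(h/2)^k$ is correct, and together with $\nu_p(R_j)\geq\nu_p(j)$ (from (\ref{777477}) with $(\frac{\Delta}{p})=0$, or directly from the first part of (\ref{yzhp1})) and $\nu_p\big(\binom{hn}{j}\big)\geq\nu_p(n)-\nu_p(j)$ it gives (\ref{jiankun12}) cleanly --- in fact more directly than the paper's iterated block reduction. (One should add a word on applying (\ref{777477})/(\ref{77747}), stated for integers $m$, to the $p$-adic unit $m=2/h$, e.g.\ by replacing $m$ with an integer congruent to $2/h$ modulo a high power of $p$; the paper makes the same silent extension when it invokes (\ref{yzhp2}) with $m=2/h$.)

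For (\ref{jiankun11}), however, the proposal has a genuine gap and it is exactly where the paper's work lies. First, the remainder sum $\sum_{j=1}^{p^a-1}(-1)^j\binom{hp^a}{j}R_j$ must be evaluated modulo $p^{a+1}$, i.e.\ one $p$-adic digit beyond the trivial bound $\nu_p\geq a$, uniformly over all $j<p^a$; your text only verifies the case $a=1$, $p\geq5$ (where it indeed reduces to the quantity handled by Lemma 2.6, giving $2hp\equiv p$), and for $a\geq2$ you merely announce an induction ``expected to succumb.'' The inductive data actually needed are block congruences of the type of (\ref{yzhp5}) (each $p^{\alpha}$-block of $R$ is $p^{\alpha}\binom{2l}{l}/4^l$ modulo $p^{\alpha+1}$) together with the harmonic-weighted sum (\ref{yzhp10}) for all $a$; none of this is formulated or proved in the proposal, so the central cancellation modulo $p^{a+1}$ for $a\geq2$ is not established. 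Second, the case $p=3$, $a\geq2$ lies inside the theorem's hypotheses but is not covered: your key evaluation $u_p(A,1)\equiv p(A/2)^{p-1}\pmod{p^2}$ is false for $p=3$ (e.g.\ $A=5$, which occurs for $h\equiv8\pmod 9$, gives $u_3=24\not\equiv 3\cdot(5/2)^2\pmod 9$; the term $\Delta^{(p-1)/2}$ in $2^{p-1}u_p=\sum_k\binom{p}{2k+1}A^{p-1-2k}\Delta^k$ is only divisible by $p$ when $p=3$), and likewise $\binom{hp^a-1}{p^a-1}\equiv1\pmod{p^2}$ uses Wolstenholme and needs $p\geq5$. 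Your closing remark only excludes $(p,a)=(3,1)$, whereas a separate $3$-adic argument is required --- this is what the paper's (\ref{yzhp8}), (\ref{yzhp24}) and the Strauss--Shallit--Zagier valuation (\ref{yzhp25}) are for. In sum, the proposal proves (\ref{jiankun12}) and the case $a=1$, $p\geq5$ of (\ref{jiankun11}), but the main cases $a\geq2$ and all of $p=3$ remain unproved.
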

On the other hand, based on (\ref{777477}) and (\ref{7774777}), Sun \cite{ZWS17} conjectured the corresponding result  with $p\nmid m$ .The second aim of this paper is to show the following result.
\begin{Thm}Let $p$ be an odd prime and let $\Delta=m(m-4)$. For any integer $m\not\equiv 0\pmod{p^{}}$ and positive integer $n$, we have
\begin{align}\label{jiankun1177}
\frac{1}{pn}\bigg(\sum_{k=0}^{pn-1}\binom{pn-1}{k}\frac{\binom{2k}{k}}{(-m)^k}-\bigg(\frac{\Delta}{p}\bigg)\sum_{k=0}^{n-1}\binom{n-1}{k}\frac{\binom{2k}{k}}{(-m)^k}\bigg)\in \Z_{p}.
\end{align}
\end{Thm}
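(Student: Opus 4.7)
The plan is to reduce the statement to Sun's congruence (\ref{777477}) via a Lucas-type index decomposition followed by an Abel summation. Throughout set
$$S(N):=\sum_{k=0}^{N-1}\binom{N-1}{k}\frac{\binom{2k}{k}}{(-m)^k},\qquad T(N):=\sum_{k=0}^{N-1}\frac{\binom{2k}{k}}{m^k},$$
and $\nu:=\nu_p(n)$. Because $pn\Z_p=p^{1+\nu}\Z_p$, it suffices to prove $S(pn)\equiv\bigl(\tfrac{\Delta}{p}\bigr)S(n)\pmod{p^{1+\nu}}$.

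First I would write each index $k\in\{0,\ldots,pn-1\}$ as $k=qp+r$ with $0\le q\le n-1$ and $0\le r\le p-1$, and use the factorization
$$(-1)^k\binom{pn-1}{k}=\prod_{j=1}^{k}\Bigl(1-\frac{pn}{j}\Bigr)=\binom{n-1}{q}(-1)^q\prod_{\substack{1\le j\le k\\ p\nmid j}}\Bigl(1-\frac{pn}{j}\Bigr),$$
where the factors indexed by $j=p,2p,\ldots,qp$ collapse to $\binom{n-1}{q}(-1)^q$. Since $\nu_p(pn/j)=1+\nu$ for every $j$ coprime to $p$, the residual product lies in $1+p^{1+\nu}\Z_p$, so multiplying by $\binom{2k}{k}/m^k\in\Z_p$ and summing over $k$ gives
$$S(pn)\equiv\sum_{q=0}^{n-1}\binom{n-1}{q}(-1)^q U_q\pmod{p^{1+\nu}\Z_p},\qquad U_q:=T\bigl(p(q+1)\bigr)-T(pq).$$

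Next I would apply (\ref{777477}) to write $T(pq)=\bigl(\tfrac{\Delta}{p}\bigr)T(q)+pq\,z_q$ with $z_q\in\Z_p$ (and $z_0:=0$). This gives $U_q=\bigl(\tfrac{\Delta}{p}\bigr)\binom{2q}{q}/m^q+p\,\delta_q$ with $\delta_q:=(q+1)z_{q+1}-qz_q$, so
$$S(pn)\equiv\Bigl(\tfrac{\Delta}{p}\Bigr)S(n)+p\sum_{q=0}^{n-1}\binom{n-1}{q}(-1)^q\delta_q\pmod{p^{1+\nu}}.$$
Setting $A_q:=\binom{n-1}{q}(-1)^q$ and $B_q:=qz_q$, Abel summation together with $A_q-A_{q-1}=(-1)^q\binom{n}{q}$ transforms the residual sum into $(-1)^{n-1}nz_n-\sum_{q=1}^{n-1}(-1)^q\binom{n}{q}\,qz_q$. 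The identity $q\binom{n}{q}=n\binom{n-1}{q-1}$ then factors out $n$, so the residual sum equals $nW$ for some $W\in\Z_p$, and the error term $pnW$ lies in $p^{1+\nu}\Z_p$ as required.

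The main obstacle is guaranteeing the extra factor of $n$ at the end. Term by term, (\ref{777477}) supplies only $pq$-divisibility, which is weaker than the required $pn$-divisibility; the mechanism that rescues the argument is the pairing of the binomial difference $(-1)^q\binom{n}{q}$ produced by summation by parts with the identity $q\binom{n}{q}=n\binom{n-1}{q-1}$, ensuring every $z_q$ (including the boundary contribution from $B_n=nz_n$) carries a factor of $n$. Verifying this cancellation carefully, while keeping track of the $p^{1+\nu}$-approximation from the Lucas-type factorization, is the technical heart of the proof.
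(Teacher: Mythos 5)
Your proposal is correct, and after the opening step it follows a genuinely different route from the paper's. The first half coincides: the factorization $(-1)^k\binom{pn-1}{k}=(-1)^{\lfloor k/p\rfloor}\binom{n-1}{\lfloor k/p\rfloor}\prod_{p\nmid j,\,j\le k}\bigl(1-\tfrac{pn}{j}\bigr)$ is exactly the mechanism behind Lemma \ref{lzhang} and the containment (\ref{47}), and both arguments use it to replace $\binom{pn-1}{k}$ by $\binom{n-1}{\lfloor k/p\rfloor}$ modulo $pn\Z_p$ and regroup into the blocks $U_q=T(p(q+1))-T(pq)$. From there the paper runs an iterative $p$-adic descent: writing $n=p^{a-1}d$, it repeats the binomial replacement level by level and at each stage invokes Lemma 2.5 (display (\ref{7747})), whose proof rests on the sharper congruence (\ref{77747}) together with the divisibility $p\mid u_{p-(\frac{\Delta}{p})}(m-2,1)$, to gain the needed power of $p$ blockwise. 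You avoid Lemma 2.5 and the Lucas-sequence input entirely: you apply only the weaker statement (\ref{777477}) termwise, writing $U_q=\bigl(\tfrac{\Delta}{p}\bigr)\binom{2q}{q}m^{-q}+p\bigl((q+1)z_{q+1}-qz_q\bigr)$ with $z_q\in\Z_p$, and then recover the missing factor of $n$ purely combinatorially via Abel summation (using $A_q-A_{q-1}=(-1)^q\binom{n}{q}$, the boundary term $(-1)^{n-1}nz_n$, and $z_0=0$) and the identity $q\binom{n}{q}=n\binom{n-1}{q-1}$; the resulting error $p\,nW$ with $W\in\Z_p$ indeed lies in $pn\Z_p=p^{1+\nu}\Z_p$, which closes the argument. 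Your version is leaner, needing only a single reduction step, summation by parts, and the coarser input (\ref{777477}); the paper's blockwise descent requires the refined congruence (\ref{77747}) and $p\mid u_{p-(\frac{\Delta}{p})}$, but in exchange it keeps explicit Lucas-sequence main terms at every level and runs parallel to the proof of Theorem 1.1.
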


The remainder of the paper is organized as follows. In the next section, we give some lemmas. The proofs of Theorems 1.1 and 1.2 will be given in Section 3.

\section{Some Lemmas}
\setcounter{equation}{0}
\setcounter{Thm}{0}
\setcounter{Lem}{0}
\setcounter{Cor}{0}
\setcounter{Conj}{0}
In the following section, for an assertion $A$ we adopt the notation:
$$[A]=\begin{cases}1,&\text{if A holds,} \\
0,&\text{otherwise}.\end{cases} $$
We know that $[m=n]$ coincides with the Kronecker symbol $\delta_{m,n}$.
\begin{Lem}
\label{lzhang}
Let $n, k, \alpha$ be positive integers and $p$ be a prime. Then
\begin{equation}\label{y71}
\binom{p^{\alpha}n-1}{k}\equiv \binom{p^{\alpha-1}n-1}{\floor {\frac{k}{p}}}(-1)^{k-\floor {\frac{k}{p}}}(1-np^{\alpha}\sum_{\substack{j=1\\p\nmid j}}^{k}\frac{1}{j})\pmod{p^{2{\alpha}}}.
\end{equation}
\end{Lem}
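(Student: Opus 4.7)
The plan is to expand $\binom{p^{\alpha}n-1}{k}$ as a product and separate the factors according to whether their index is divisible by $p$, then Taylor-expand the non-$p$-divisible part modulo $p^{2\alpha}$. Concretely, I would write
\[
\binom{p^{\alpha}n-1}{k}=\prod_{j=1}^{k}\frac{p^{\alpha}n-j}{j}
=\Bigg(\prod_{\substack{j=1\\ p\mid j}}^{k}\frac{p^{\alpha}n-j}{j}\Bigg)\Bigg(\prod_{\substack{j=1\\ p\nmid j}}^{k}\frac{p^{\alpha}n-j}{j}\Bigg).
\]
The identity is to be proved as an identity in $\Z_p$, and this factorisation makes sense there since every factor is a $p$-adic unit times a power of $p$.

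For the first product, the substitution $j=pi$ (with $i$ ranging over $1,\dots,\lfloor k/p\rfloor$) collapses it cleanly: $(p^{\alpha}n-pi)/(pi)=(p^{\alpha-1}n-i)/i$, so
\[
\prod_{\substack{j=1\\ p\mid j}}^{k}\frac{p^{\alpha}n-j}{j}=\prod_{i=1}^{\lfloor k/p\rfloor}\frac{p^{\alpha-1}n-i}{i}=\binom{p^{\alpha-1}n-1}{\lfloor k/p\rfloor}.
\]
For the second product I would pull out a sign from each factor, writing $(p^{\alpha}n-j)/j=-(1-p^{\alpha}n/j)$, to obtain
\[
\prod_{\substack{j=1\\ p\nmid j}}^{k}\frac{p^{\alpha}n-j}{j}=(-1)^{k-\lfloor k/p\rfloor}\prod_{\substack{j=1\\ p\nmid j}}^{k}\Bigl(1-\frac{p^{\alpha}n}{j}\Bigr).
\]

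Finally I would expand this last product. Since each $1/j$ with $p\nmid j$ is a $p$-adic unit, every factor $p^{\alpha}n/j$ has $p$-adic valuation exactly $\alpha+\nu_p(n)\ge\alpha$, so any product of two or more such factors lies in $p^{2\alpha}\Z_p$. Therefore
\[
\prod_{\substack{j=1\\ p\nmid j}}^{k}\Bigl(1-\frac{p^{\alpha}n}{j}\Bigr)\equiv 1-p^{\alpha}n\sum_{\substack{j=1\\ p\nmid j}}^{k}\frac{1}{j}\pmod{p^{2\alpha}},
\]
and combining the three pieces yields \eqref{y71}.

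There is no real obstacle here; the argument is a straightforward separation-of-factors plus a first-order $p$-adic expansion. The only things to be careful about are the combinatorial count $k-\lfloor k/p\rfloor$ of factors of $-1$ and the observation that the cross terms in the expansion are killed by $p^{2\alpha}$, which only uses that each omitted index $j$ is coprime to $p$ so that $1/j\in\Z_p$.
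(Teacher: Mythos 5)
Your proposal is correct and follows essentially the same route as the paper: split the product defining $\binom{p^{\alpha}n-1}{k}$ into the factors with $p\mid j$ (which collapse to $\binom{p^{\alpha-1}n-1}{\lfloor k/p\rfloor}$) and those with $p\nmid j$, then expand the latter to first order, the higher-order terms being divisible by $p^{2\alpha}$. No gaps; your extra remarks on working in $\Z_p$ and counting the signs just make explicit what the paper leaves implicit.
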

\begin{proof}
Note that
\begin{align}\notag
\binom{p^\alpha n-1}{k}&=\prod_{j=1}^{k}\frac{p^{\alpha} n-j}{j}=\binom{p^{{\alpha}-1}n-1}{\floor {\frac{k}{p}}}\prod_{\substack{j=1\\p\nmid j}}^{k}\frac{p^{\alpha} n-j}{j}\\\notag
&\equiv\binom{p^{{\alpha}-1}n-1}{\floor {\frac{k}{p}}}(-1)^{k-\floor {\frac{k}{p}}}(1-\sum_{\substack{j=1\\p\nmid j}}^{k}\frac{p^{\alpha} n}{j})\pmod{p^{2{\alpha}}}.
\end{align}
This proves (\ref{y71}).
\end{proof}

\begin{Lem}Let $p$ be an odd prime. Then, for any integers $a, b$ and positive integers $r, s,$ we have
\begin{align}\label{yzhp6}
\binom{p^ra}{p^sb}/\binom{p^{r-1}a}{p^{s-1}b}\equiv 1(\mod p^{r+s+min\{r, s\}-\delta_{p,3}}).
\end{align}
\end{Lem}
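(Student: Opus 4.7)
The plan is to reduce Lemma~2.2 to the classical Jacobsthal--Kazandzidis supercongruence: for any odd prime $p$ and nonnegative integers $N\geq K$, one has
\begin{equation*}
\binom{pN}{pK} \equiv \binom{N}{K} \pmod{p^{3-\delta_{p,3}}\,NK(N-K)\,\binom{N}{K}}.
\end{equation*}
Setting $N=p^{r-1}a$ and $K=p^{s-1}b$ turns the left-hand side into $\binom{p^r a}{p^s b}$ and the right-hand side into $\binom{p^{r-1}a}{p^{s-1}b}$, the two binomials that appear in \eqref{yzhp6}. Dividing the congruence through by $\binom{N}{K}$, the quotient in \eqref{yzhp6} is congruent to $1$ modulo $p^{3-\delta_{p,3}}\,NK(N-K)$ (as a $p$-adic integer).

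It then remains to bound $\nu_p(NK(N-K))$ from below. Clearly $\nu_p(N)\geq r-1$ and $\nu_p(K)\geq s-1$. For the difference, writing
\[
N-K = p^{\min\{r,s\}-1}\bigl(p^{\,r-\min\{r,s\}}a - p^{\,s-\min\{r,s\}}b\bigr)
\]
gives $\nu_p(N-K)\geq \min\{r,s\}-1$. Summing these three bounds yields
\[
\nu_p\bigl(NK(N-K)\bigr) \geq (r-1)+(s-1)+(\min\{r,s\}-1) = r+s+\min\{r,s\}-3,
\]
so the quotient is $\equiv 1 \pmod{p^{\,r+s+\min\{r,s\}-\delta_{p,3}}}$, which is exactly \eqref{yzhp6}.

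The main obstacle, and the only nontrivial input, is the Kazandzidis supercongruence itself; it is a classical but nontrivial refinement of Wolstenholme's theorem, and may simply be cited from the literature. If one prefers a self-contained derivation, one proceeds by induction on $K$, reducing to an analysis of $\binom{pM}{p}/M$ modulo $p^{3-\delta_{p,3}}M(M-1)$ via the expansion
\[
\binom{pM}{p} = M\prod_{j=1}^{p-1}\!\left(1-\frac{pM}{j}\right)^{-1}\!\!\cdot\text{(sign/rearrangement)},
\]
and invoking the Wolstenholme-type identities $\sum_{k=1}^{p-1}1/k\equiv 0\pmod{p^{2-\delta_{p,3}}}$ and $\sum_{k=1}^{p-1}1/k^2\equiv 0\pmod{p^{1-\delta_{p,3}}}$. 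The degradation at $p=3$ (where Wolstenholme's congruence holds only modulo $p$ rather than $p^2$) is precisely what forces the $\delta_{p,3}$ correction in \eqref{yzhp6}.
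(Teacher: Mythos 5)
Your proposal is correct, and it actually does more than the paper, which offers no proof at all for this lemma: it simply cites it as equation (19) of Osburn--Sahu--Straub. Your route --- invoking the Jacobsthal--Kazandzidis supercongruence $\binom{pN}{pK}\equiv\binom{N}{K} \pmod{p^{3-\delta_{p,3}}NK(N-K)\binom{N}{K}}$ with $N=p^{r-1}a$, $K=p^{s-1}b$, dividing by $\binom{N}{K}$, and adding the valuation bounds $\nu_p(N)\ge r-1$, $\nu_p(K)\ge s-1$, $\nu_p(N-K)\ge\min\{r,s\}-1$ --- is exactly the standard derivation (and essentially how the cited source obtains the statement), and the bookkeeping $(r-1)+(s-1)+(\min\{r,s\}-1)+(3-\delta_{p,3})=r+s+\min\{r,s\}-\delta_{p,3}$ is right, provided the Kazandzidis congruence is read $p$-adically (as you note) so that only the $p$-part of the modulus matters. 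Two small caveats: the lemma as stated allows arbitrary integers $a,b$, whereas the Kazandzidis input you quote is for $N\ge K\ge 0$; the excluded cases are either degenerate (the denominator binomial vanishes or both sides are trivial) or require the extension of Kazandzidis to negative upper entries, and in any event only nonnegative $a\ge b$ with $r=s$ are used in the paper, so nothing is lost. Also, your closing sketch of a self-contained proof of Jacobsthal--Kazandzidis is only a sketch (the ``sign/rearrangement'' step and the induction are not carried out), but since you explicitly offer to cite it from the literature, your argument is complete at the same level of rigor as the paper's, indeed more so.
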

This lemma is a well-known congruence due to R. Osburn, B. Sahu and A. Straub, see, e.g., \cite[(19)]{ROBSAS}.

The following curious result is due to Sun \cite{ZWS7}:
\begin{Lem}[See {\cite[Theorem 1.1]{ZWS7}}]
\label{y5}
Let $m\in \Z$ and $n\in \Z^{+}$. Suppose that $p$ is an odd prime dividing $m-4$. Then
\begin{align}\label{yzhp1}
\nu_{p}\bigg(\sum_{k=0}^{n-1}\frac{\binom{2k}{k}}{m^k}\bigg)\geq\nu_{p}(n)\ \ and \ \ \nu_{p}\bigg(\sum_{k=0}^{n-1}\binom{n-1}{k}(-1)^k\frac{\binom{2k}{k}}{m^k}\bigg)\geq\nu_{p}(n).
\end{align}
Furthermore,
\begin{align}\label{yzhp2}
\frac{1}{n}\sum_{k=0}^{n-1}\frac{\binom{2k}{k}}{m^k}\equiv \frac{\binom{2n-1}{n-1}}{4^{n-1}}+\delta_{p,3}[3\mid n]\frac{m-4}{3}\binom{\frac{2n}{3^{\nu_{3}(n)}}-1}{\frac{n}{3^{\nu_{3}(n)}}-1}\pmod{p^{\nu_{p}(m-4)}}
\end{align}
and also
\begin{align*}
\frac{1}{n}\sum_{k=0}^{n-1}\binom{n-1}{k}(-1)^k\frac{\binom{2k}{k}}{m^k}\equiv\frac{C_{n-1}}{4^{n-1}}\pmod{p^{\nu_{p}(m-4)-\delta_{p,3}}},
\end{align*}
where $C_{k}$ denote the Catalan number $\frac{1}{k+1}\binom{2k}{k}=\binom{2k}{k}-\binom{2k}{k+1}$. Thus, for $a\in \Z^{+}$ we have 
\begin{align}\label{yzhp4}
\frac{1}{p^a}\sum_{k=0}^{p^a-1}\frac{\binom{2k}{k}}{m^k}\equiv1+\delta_{p,3}\frac{m-4}{3}\equiv\frac{m-1}{3}\pmod{p}.
\end{align}
\end{Lem}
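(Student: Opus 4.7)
The plan is to split the argument according to whether $p\mid m-4$. If $p\mid m-4$, then $\big(\tfrac{\Delta}{p}\big)=0$ and the statement reduces to showing $\nu_p(S_{pn})\ge\nu_p(pn)=1+\nu_p(n)$, where I write
\[S_N:=\sum_{k=0}^{N-1}\binom{N-1}{k}\frac{\binom{2k}{k}}{(-m)^k}=\sum_{k=0}^{N-1}\binom{N-1}{k}(-1)^k\frac{\binom{2k}{k}}{m^k}.\]
This is exactly the second inequality of (\ref{yzhp1}) applied with $pn$ in place of $n$, so this case is immediate.

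Henceforth assume $p\nmid m(m-4)$, and set $a:=\nu_p(n)$; the goal is $S_{pn}\equiv\big(\tfrac{\Delta}{p}\big)S_n\pmod{p^{a+1}}$. Apply Lemma~\ref{lzhang} with $\alpha=a+1$ and parameter $\tilde n:=n/p^a$, so that $p^{\alpha}\tilde n=pn$ and $p^{\alpha-1}\tilde n=n$. Then (\ref{y71}) reads
\[\binom{pn-1}{k}\equiv\binom{n-1}{\lfloor k/p\rfloor}(-1)^{k-\lfloor k/p\rfloor}\Bigl(1-np\sum_{\substack{j=1\\p\nmid j}}^{k}\frac{1}{j}\Bigr)\pmod{p^{2(a+1)}}.\]
Since $np\in p^{1+a}\Z_p$, the parenthesised factor is $\equiv 1\pmod{p^{1+a}}$, giving $\binom{pn-1}{k}\equiv\binom{n-1}{\lfloor k/p\rfloor}(-1)^{k-\lfloor k/p\rfloor}\pmod{p^{1+a}}$. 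Writing $k=qp+r$ with $0\le r<p$, using $(-1)^{k-q}=(-1)^r$ and the identity $(-1)^r/(-m)^{qp+r}=(-1)^q/m^{qp+r}$ (both valid because $p$ is odd), I obtain
\[S_{pn}\equiv\sum_{q=0}^{n-1}\binom{n-1}{q}(-1)^q\bigl(U_{(q+1)p}-U_{qp}\bigr)\pmod{p^{1+a}},\qquad U_N:=\sum_{k=0}^{N-1}\frac{\binom{2k}{k}}{m^k}.\]

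Next, invoke Sun's result (\ref{777477}): for every $N\ge 0$ there is $e_N\in\Z_p$ (with the convention $e_0:=0$) such that $U_{Np}=\big(\tfrac{\Delta}{p}\big)U_N+pNe_N$. Substituting gives $U_{(q+1)p}-U_{qp}=\big(\tfrac{\Delta}{p}\big)\binom{2q}{q}/m^q+p\,f_q$ with $f_q:=(q+1)e_{q+1}-qe_q$, whence $S_{pn}\equiv\big(\tfrac{\Delta}{p}\big)S_n+pG\pmod{p^{1+a}}$, where $G:=\sum_{q=0}^{n-1}\binom{n-1}{q}(-1)^q f_q$. Splitting $f_q$ into its two pieces, shifting the summation index of the $e_{q+1}$-piece by one, applying the Pascal identity $\binom{n-1}{q-1}+\binom{n-1}{q}=\binom{n}{q}$, and finally using $q\binom{n}{q}=n\binom{n-1}{q-1}$, the correction telescopes into
\[G=n\sum_{q=0}^{n-1}(-1)^q\binom{n-1}{q}e_{q+1}\in n\Z_p.\]
Hence $\nu_p(pG)\ge 1+\nu_p(n)=1+a$, so $pG\equiv 0\pmod{p^{1+a}}$ and the desired congruence follows.

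The main obstacle is extracting the extra factor $n$ inside $G$: Lemma~\ref{lzhang} alone delivers only a correction of order $p$, which is enough when $\nu_p(n)=0$ but insufficient as soon as $p\mid n$. The gain of the additional factor $n$ rests on the telescoping form $f_q=(q+1)e_{q+1}-qe_q$ of the error produced by Sun's (\ref{777477}), combined with the combinatorial identity $q\binom{n}{q}=n\binom{n-1}{q-1}$; without this interplay, invoking (\ref{777477}) term by term would not lift the conclusion to the full modulus $p^{1+\nu_p(n)}=pn\Z_p$.
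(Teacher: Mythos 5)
Your proposal does not prove the statement at hand. The statement is Lemma~\ref{y5}, whose hypothesis is that $p$ is an odd prime \emph{dividing} $m-4$, and whose content is the pair of valuation bounds (\ref{yzhp1}) together with the explicit congruences (\ref{yzhp2}), the Catalan-number congruence modulo $p^{\nu_p(m-4)-\delta_{p,3}}$, and the consequence (\ref{yzhp4}). What you have written is instead an argument for Theorem~1.2 of the paper, i.e.\ the assertion (\ref{jiankun1177}) that $\frac{1}{pn}\bigl(S_{pn}-(\frac{\Delta}{p})S_n\bigr)\in\Z_p$: you split on whether $p\mid m-4$, and in your main case you explicitly assume $p\nmid m(m-4)$, which is vacuous under the lemma's hypothesis. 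Worse, in the case $p\mid m-4$ you dispose of the claim by invoking ``the second inequality of (\ref{yzhp1}) applied with $pn$ in place of $n$'' --- that inequality is part of the very statement you were asked to prove, so as a proof of Lemma~\ref{y5} the argument is circular, and none of the actual content of the lemma (the $\delta_{p,3}$ correction term in (\ref{yzhp2}), the Catalan congruence, or (\ref{yzhp4})) is addressed anywhere.

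For what it is worth, the paper does not prove Lemma~\ref{y5} either: it is quoted verbatim from Z.-W. Sun, \emph{$p$-adic valuations of some sums of multinomial coefficients}, Acta Arith.\ \textbf{148} (2011), Theorem~1.1, so the expected ``proof'' is simply that citation. Your telescoping argument (writing $U_{Np}=(\frac{\Delta}{p})U_N+pNe_N$ via (\ref{777477}), reducing $\binom{pn-1}{k}$ by Lemma~\ref{lzhang}, and extracting the factor $n$ from $\sum_q(-1)^q\binom{n-1}{q}\bigl((q+1)e_{q+1}-qe_q\bigr)$ via $q\binom{n}{q}=n\binom{n-1}{q-1}$) is a reasonable sketch of a route to Theorem~1.2, and is genuinely different from the paper's proof of that theorem (which iterates the reduction $a$ times using Lemma~2.5 rather than telescoping once); but it answers a different question, and if you want to salvage it as a proof of Theorem~1.2 you must still supply Lemma~\ref{y5} independently to handle the case $p\mid m-4$.
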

\begin{Lem}
Let $p$ be an odd prime and let $h\in \Z$ with $2h-1\equiv0\pmod{p^{}}$. Let $l, {\alpha}$ be nonnegative integers. If $p\geq 5$, then we have
\begin{align}\label{yzhp5}
\sum_{\floor {\frac{k}{p^{\alpha}}}=l}\binom{2k}{k}\bigg(\frac{h}{2}\bigg)^k\equiv p^{{\alpha}}\frac{\binom{2l}{l}}{4^l}\pmod{p^{{\alpha}
+1}}.
\end{align}
If $p=3$, then
\begin{align}\label{yzhp15}
\sum_{\floor {\frac{k}{p^{\alpha}}}=l}\binom{2k}{k}\bigg(\frac{h}{2}\bigg)^k\equiv 0\pmod{p^{{\alpha}
}}.
\end{align}
If $p=3$ and $l\equiv 1\pmod{3},$ then 
\begin{align}\label{yzhp8}
\sum_{\floor {\frac{k}{p^{\alpha}}}=l}\binom{2k}{k}\bigg(\frac{h}{2}\bigg)^k\equiv p^{{\alpha}}\frac{\binom{2l}{l}}{4^l}+2[{\alpha}\geq 1]p^{{\alpha}-1}\frac{(1-2h)\binom{2l}{l}}{h}\pmod{p^{{\alpha}
+1}}.
\end{align}
\end{Lem}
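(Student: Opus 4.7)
The plan is to rewrite the summand $\binom{2k}{k}(h/2)^k$ as $\binom{2k}{k}/m^k$ for an integer $m\equiv 2/h\pmod{p^M}$ with $M$ sufficiently large; this is legitimate because $2h\equiv 1\pmod p$ makes $2/h$ a $p$-adic unit. The hypothesis then forces $m\equiv 4\pmod p$ and $\nu_p(m-4)=\nu_p(2h-1)=:t\geq 1$, so Sun's Lemma~\ref{y5} applies to the partial sums $\Phi(N):=\sum_{k=0}^{N-1}\binom{2k}{k}/m^k$. Writing the window sum as the telescoping difference $S=\Phi(N_+)-\Phi(N_-)$ with $N_\pm=(l+\delta)p^\alpha$, $\delta\in\{0,1\}$, the base case $\alpha=0$ is the one-term identity $S=\binom{2l}{l}(h/2)^l\equiv\binom{2l}{l}/4^l\pmod p$ via $(2h)^l\equiv 1\pmod p$.

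For $\alpha\geq 1$ I would invoke~(\ref{yzhp2}), namely
\[
\Phi(N)\equiv N\frac{\binom{2N-1}{N-1}}{4^{N-1}}+\delta_{p,3}[3\mid N]\,N\,\frac{m-4}{3}\binom{2N/3^{\nu_3(N)}-1}{N/3^{\nu_3(N)}-1}\pmod{p^{t+\nu_p(N)}}.
\]
Since $\nu_p(N_\pm)\geq\alpha$ and $t\geq 1$, the working modulus for the difference is at least $p^{\alpha+1}$. Using $\binom{2N-1}{N-1}=\binom{2N}{N}/2$ together with $\binom{2Np^\alpha}{Np^\alpha}\equiv\binom{2N}{N}\pmod p$ (a consequence of~(\ref{yzhp6})) and Fermat's $4^{p^\alpha}\equiv 4\pmod p$, the main-term contribution of $S$ reduces modulo $p^{\alpha+1}$ to
\[
2p^\alpha\Big[(l+1)\binom{2l+2}{l+1}/4^{l+1}-l\binom{2l}{l}/4^l\Big]=p^\alpha\frac{\binom{2l}{l}}{4^l},
\]
by the elementary identities $(l+1)\binom{2l+2}{l+1}=2(2l+1)\binom{2l}{l}$ and $\binom{2l-1}{l-1}=\binom{2l}{l}/2$. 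This settles~(\ref{yzhp5}) for $p\geq 5$, where the $\delta_{p,3}$ correction is absent.

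When $p=3$ and $l\equiv 1\pmod 3$, both $N_\pm$ have $\nu_3=\alpha$ with $N_\pm/3^\alpha\in\{l,l+1\}$, so the $\delta_{p,3}$ correction contributes
\[
3^\alpha\cdot\frac{m-4}{3}\cdot\Big[(l+1)\binom{2l+1}{l}-l\binom{2l-1}{l-1}\Big]=3^\alpha\cdot\frac{m-4}{3}\cdot\frac{(3l+2)\binom{2l}{l}}{2};
\]
substituting $(m-4)/3=-2(2h-1)/(3h)$ and noting that the discrepancy $3l\cdot(2h-1)/h$ has $\nu_3\geq\alpha+1$ reduces this correction modulo $3^{\alpha+1}$ to $2\cdot 3^{\alpha-1}(1-2h)\binom{2l}{l}/h$, yielding~(\ref{yzhp8}). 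When $p=3$ and $l\not\equiv 1\pmod 3$, the weaker assertion~(\ref{yzhp15}) follows directly from the valuation bound in~(\ref{yzhp1}): $\nu_3(N_\pm)\geq\alpha$ forces $\nu_3(\Phi(N_\pm))\geq\alpha$, hence $\nu_3(S)\geq\alpha$. The main obstacle will be the $p=3$, $l\equiv 1\pmod 3$ case, where the combinatorial simplification $(l+1)\binom{2l+1}{l}-l\binom{2l-1}{l-1}=(3l+2)\binom{2l}{l}/2$ and the $p$-adic truncation of $(3l+2)(2h-1)$ must align so that the stated correction emerges cleanly modulo $3^{\alpha+1}$; a minor technicality is the passage from $2/h\in\Z_p^\times$ to an honest integer $m$, handled by approximation modulo a large power of $p$.
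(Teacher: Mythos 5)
Your proposal is correct and follows essentially the same route as the paper: write the window sum as the telescoping difference of two initial sums, apply Sun's Lemma~\ref{y5} (specifically (\ref{yzhp2}), with $m$ a $p$-adic/integer stand-in for $2/h$, so $\nu_p(m-4)\geq 1$), reduce the resulting binomials via (\ref{yzhp6}) and Fermat, and use the same identities $(l+1)\binom{2l+2}{l+1}=2(2l+1)\binom{2l}{l}$ and $(l+1)\binom{2l+1}{l}-l\binom{2l-1}{l-1}=\tfrac{3l+2}{2}\binom{2l}{l}$ to extract the main term and the $p=3$ correction. Your explicit integer approximation of $2/h$ and the derivation of (\ref{yzhp15}) directly from (\ref{yzhp1}) are only cosmetic departures from the paper's argument.
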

\begin{proof}
Observe that
\begin{align*}
\sum_{\floor {\frac{k}{p^{\alpha}}}=l}\binom{2k}{k}\bigg(\frac{h}{2}\bigg)^k&=
\sum_{k=0}^{p^{\alpha}l+p^{\alpha}-1}\binom{2k}{k}\bigg(\frac{h}{2}\bigg)^k-\sum_{k=0}^{p^{\alpha}l-1}\binom{2k}{k}\bigg(\frac{h}{2}\bigg)^k.
\end{align*}
Since $\frac{2}{h}-4=\frac{2(1-2h)}{h}\equiv 0\pmod{p}$ and $l\equiv 1\pmod{3}$, by (\ref{yzhp2}) we obtain that
\begin{align}\notag
&\sum_{\floor {\frac{k}{p^{\alpha}}}=l}\binom{2k}{k}\bigg(\frac{h}{2}\bigg)^k\\\notag&\equiv (p^{\alpha}l+p^{\alpha})\bigg(\frac{\binom{(2l+2)p^{\alpha}}{(l+1)p^{\alpha}}}{4^{p^{\alpha}l+p^{\alpha}-1}2}+\delta_{p,3}[3\mid (p^{\alpha}l+p^{\alpha})]\frac{2(1-2h)}{3h}\binom{\frac{2(p^{\alpha}l+p^{\alpha})}{3^{\nu_{3}(p^{\alpha}l+p^{\alpha})}}-1}{\frac{p^{\alpha}l+p^{\alpha}}{3^{\nu_{3}(p^{\alpha}l+p^{\alpha})}}-1}\bigg)\\\label{yzhp999}
&\ \ -p^{\alpha}l\bigg(
\frac{\binom{2p^{\alpha}l}{p^{\alpha}l}}{4^{p^{\alpha}l-1}2}+\delta_{p,3}[3\mid (p^{\alpha}l)]\frac{2(1-2h)}{3h}\binom{\frac{2p^{\alpha}l}{3^{\nu_{3}(p^{\alpha}l)}}-1}{\frac{p^{\alpha}l}{3^{\nu_{3}(p^{\alpha}l)}}-1}\bigg)\pmod{p^{{\alpha}
+1}}.
\end{align}
When $p\geq 5$, then by (\ref{yzhp6}) and (\ref{yzhp999}) we get
\begin{align*}
\sum_{\floor {\frac{k}{p^{\alpha}}}=l}\binom{2k}{k}\bigg(\frac{h}{2}\bigg)^k&\equiv p^{\alpha}(l+1)\frac{\binom{2l+2}{l+1}}{4^{l}2}-p^{\alpha}l\frac{\binom{2l}{l}}{4^{l-1}2}\\
&=p^{\alpha}(2l+1)\frac{\binom{2l}{l}}{4^{l}}-2p^{\alpha}l\frac{\binom{2l}{l}}{4^{l}}=p^{{\alpha}}\frac{\binom{2l}{l}}{4^l}\pmod{p^{{\alpha}
+1}}.
\end{align*}
Thus (\ref{yzhp5}) is proved. (\ref{yzhp15}) is eazily deduced from (\ref{yzhp999}).

Finally we will prove (\ref{yzhp8}). (\ref{yzhp8}) is trivial when $\alpha=0.$  
Now we may assume $\alpha \geq 1$.
With the help of (\ref{yzhp6}) and (\ref{yzhp999}), for any nonnegative integer $l$ with  
$l\equiv 1\pmod{3}$ we have
\begin{align*}
\sum_{\floor {\frac{k}{3^{\alpha}}}=l}\binom{2k}{k}\bigg(\frac{h}{2}\bigg)^k&\equiv 3^{\alpha}(l+1)\bigg(\frac{\binom{2l+2}{l+1}}{4^{l}2}+\frac{2(1-2h)}{3h}\binom{2l+1}{l+1}\bigg)\\\notag&\ \ -3^{\alpha}l\bigg(\frac{\binom{2l}{l}}{4^{l-1}2}+\frac{1-2h}{3h}\binom{2l}{l}\bigg)\\
&\equiv3^{{\alpha}}\frac{\binom{2l}{l}}{4^l}+23^{{\alpha}-1}\frac{(1-2h)\binom{2l}{l}}{h}\pmod{3^{{\alpha}
+1}}.
\end{align*}
This concludes the proof. 
\end{proof}
\begin{Lem}
Let $p$ be an odd prime and $l, s$ be nonnegative integers. Let $\Delta=m(m-4)$. For any integer $m\not\equiv 0\pmod{p^{
}}$, we have
\begin{align}\notag
&\sum_{k=p^sl}^{p^sl+p^s-1}\frac{\binom{2k}{k}}{m^k}-\bigg(\frac{\Delta}{p}\bigg)\sum_{k=p^{s-1}l}^{p^{s-1}l+p^{s-1}-1}\frac{\binom{2k}{k}}{m^k}\\\label{7747}&
\equiv p^{{s-1}}\frac{\binom{2l}{l}}{m^l}((2-\frac{m}{2})l+1)u_{p-(\frac{\Delta}{p})}(m-2,1)\pmod{p^{{s}
+1}}.
\end{align}
\end{Lem}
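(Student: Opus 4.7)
The plan is to deduce (\ref{7747}) from Sun's identity (\ref{77747}) by writing the left-hand side as a telescoping difference and then using the divisibility $p\mid u_{p-(\Delta/p)}(m-2,1)$ to absorb one extra factor of $p$ when simplifying the main term.

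\textbf{Telescope.} Put $T(N):=\sum_{k=0}^{N-1}\binom{2k}{k}/m^k$ and
$$G(n):=T(pn)-\bigg(\frac{\Delta}{p}\bigg)T(n),$$
so that (\ref{77747}) reads $G(n)/n\equiv \binom{2n}{n}u/(2m^{n-1})\pmod{p^2}$, with $u:=u_{p-(\Delta/p)}(m-2,1)$. Since $T(p^s a)=T(p\cdot p^{s-1}a)$, for $s\ge 1$ the left-hand side of (\ref{7747}) equals
$$G\bigl(p^{s-1}(l+1)\bigr)-G\bigl(p^{s-1}l\bigr).$$
The degenerate case $s=0$ is verified directly (the second sum reduces to an empty sum or a single term, and $u\in p\Z_p$ accounts for the apparent $p^{-1}$ on the right).

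\textbf{Lifting precision.} Set $n_i:=p^{s-1}m_i$ with $(m_1,m_2)=(l+1,l)$. Applying (\ref{77747}) at $n=n_i$ and multiplying through by $n_i$, whose $p$-adic valuation is at least $s-1$, yields
$$G(n_i)\equiv n_i\,\frac{\binom{2n_i}{n_i}}{2m^{n_i-1}}\,u\pmod{p^{s+1}}.$$
Subtracting the two instances and writing $C(n):=\binom{2n}{n}/(2m^{n-1})$, I obtain
$$\text{LHS of (\ref{7747})}\equiv p^{s-1}\bigl((l+1)C(n_1)-lC(n_2)\bigr)\,u\pmod{p^{s+1}}.$$

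\textbf{Reduction to an algebraic identity.} Because $p\mid u$, it suffices to establish
$$(l+1)C(n_1)-lC(n_2)\equiv\frac{\binom{2l}{l}}{m^l}\bigg(\bigg(2-\frac{m}{2}\bigg)l+1\bigg)\pmod{p}.$$
By Lucas' theorem $\binom{2n_i}{n_i}\equiv\binom{2m_i}{m_i}\pmod{p}$; since $(p-1)\mid (p^{s-1}-1)$, Fermat's little theorem gives $m^{n_i-1}\equiv m^{m_i-1}\pmod{p}$. Hence $C(n_i)\equiv C(m_i)\pmod{p}$, and the congruence reduces to the algebraic identity
$$(l+1)\frac{\binom{2l+2}{l+1}}{2m^l}-l\frac{\binom{2l}{l}}{2m^{l-1}}=\frac{\binom{2l}{l}}{m^l}\bigg(\bigg(2-\frac{m}{2}\bigg)l+1\bigg),$$
which follows at once from $\binom{2l+2}{l+1}=\frac{2(2l+1)}{l+1}\binom{2l}{l}$.

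\textbf{Main obstacle.} The only delicate point is the $p$-adic bookkeeping: (\ref{77747}) is a congruence only modulo $p^2$ after dividing by $n$, but at $n=n_i$ with $\nu_p(n_i)\ge s-1$ it upgrades to precision $p^{s+1}$ for $G(n_i)$ itself, and the extra factor of $p$ dividing $u_{p-(\Delta/p)}(m-2,1)$ exactly compensates for the fact that the bracket is being evaluated only modulo $p$. Were these precisions to line up less favourably, one would have to appeal to a stronger version of (\ref{77747}); here they fit exactly, and the remainder of the argument is the short symbolic computation above.
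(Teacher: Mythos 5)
Your proposal is correct and takes essentially the same approach as the paper: the paper's proof likewise writes the block sum as $\sum_{k=0}^{p^{s}(l+1)-1}-\sum_{k=0}^{p^{s}l-1}$ and substitutes $n=p^{s-1}(l+1)$ and $n=p^{s-1}l$ into (\ref{77747}). You merely make explicit the bookkeeping the paper leaves implicit, namely that $p\mid u_{p-(\Delta/p)}(m-2,1)$ allows the bracket to be evaluated modulo $p$, where Lucas' theorem, Fermat's little theorem and the identity $\binom{2l+2}{l+1}=\frac{2(2l+1)}{l+1}\binom{2l}{l}$ yield the stated right-hand side.
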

\begin{proof}
The proof is very similar with (\ref{yzhp5}). Clearly,
\begin{align*}\sum_{k=p^sl}^{p^sl+p^s-1}\frac{\binom{2k}{k}}{m^k}=\sum_{k=0}^{p^sl+p^s-1}\frac{\binom{2k}{k}}{m^k}-\sum_{k=0}^{p^sl-1}\frac{\binom{2k}{k}}{m^k}
\end{align*}
Substituting $n=p^{s-1}(l+1)$ and $n=p^{s-1}l$ in (\ref{77747}), we obtain (\ref{7747}).
\end{proof}
\begin{Lem}
Let $p$ be an odd prime and $h\in \Z$ with $2h-1\equiv0\pmod{p^{}}$. Let $a$ be a positive integer. Then
\begin{align}\label{yzhp10}
\sum_{k=0}^{p^a-1}{\binom{2k}{k}}\bigg(\frac{h}{2}\bigg)^k\sum_{\substack{j=1\\p\nmid j}}^{k}\frac{1}{j}\equiv \begin{cases}0\pmod{p^{}},&\text{if $a\geq 2$}, \\
2\pmod{p^{
}},&\text{if $a=1$.}\end{cases}
\end{align}
\end{Lem}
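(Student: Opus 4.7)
The plan is to reduce the congruence modulo $p$ and then factor the resulting sum along the base-$p$ digits of $k$. Since $2h \equiv 1 \pmod p$, we have $(h/2)^k \equiv 4^{-k} \pmod p$, so writing $H_k^{(p)} := \sum_{1 \leq j \leq k,\, p \nmid j} 1/j$, the target sum $S$ satisfies
\begin{align*}
S \equiv \sum_{k=0}^{p^a-1} \frac{\binom{2k}{k}}{4^k}\, H_{k}^{(p)} \pmod{p}.
\end{align*}
First I would simplify $H_k^{(p)}$ modulo $p$: writing $k = pq + k_0$ with $0 \leq k_0 < p$ and splitting each $j$ in the defining sum as $j = ip + s$ with $1 \leq s \leq p - 1$, the congruence $1/(ip + s) \equiv 1/s \pmod p$ together with the classical fact $H_{p-1} \equiv 0 \pmod p$ (pair $j$ with $p - j$) gives $H_k^{(p)} \equiv H_{k_0} \pmod p$.

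Next I would analyze $\binom{2k}{k}/4^k$ modulo $p$ via Lucas' theorem. If $k = \sum_{i=0}^{a-1} k_i p^i$ is the base-$p$ expansion, doubling $k$ produces no carries precisely when $k_i \leq (p-1)/2$ for every $i$; if some digit exceeds $(p-1)/2$, then the minimal such position forces $(2k)_i = 2k_i - p < k_i$, whence $\binom{(2k)_i}{k_i} = 0$ and $\binom{2k}{k} \equiv 0 \pmod p$. In the carry-free case, Lucas combined with Fermat's little theorem (which yields $4^{p^i} \equiv 4 \pmod p$, hence $4^k \equiv \prod_i 4^{k_i} \pmod p$) gives
\begin{align*}
\frac{\binom{2k}{k}}{4^k} \equiv \prod_{i=0}^{a-1} \frac{\binom{2k_i}{k_i}}{4^{k_i}} \pmod p.
\end{align*}
Combining this with the reduction of $H_k^{(p)}$, the outer sum splits into independent sums over the digits $k_0, k_1, \ldots, k_{a-1}$, producing
\begin{align*}
S \equiv B \cdot A^{a-1} \pmod p, \qquad A := \sum_{r=0}^{(p-1)/2} \frac{\binom{2r}{r}}{4^r}, \qquad B := \sum_{r=0}^{(p-1)/2} \frac{\binom{2r}{r}}{4^r}\, H_r.
\end{align*}

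Finally I would evaluate $A$ and $B$ via standard binomial identities. Using $\binom{2r}{r}/4^r = (-1)^r \binom{-1/2}{r} \equiv (-1)^r \binom{(p-1)/2}{r} \pmod p$ for $r \leq (p-1)/2 < p$, the binomial theorem immediately yields $A \equiv (1-1)^{(p-1)/2} = 0 \pmod p$. For $B$, the identity $\sum_{r=0}^{n} (-1)^r \binom{n}{r} H_r = -1/n$ (proved by exchanging the order of summation and using $\binom{n-1}{j-1}/j = \binom{n}{j}/n$) gives $B \equiv -2/(p-1) \equiv 2 \pmod p$. Thus $S \equiv B \equiv 2 \pmod p$ when $a = 1$, while $S \equiv B \cdot A^{a-1} \equiv 0 \pmod p$ when $a \geq 2$, which is the desired congruence. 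The main obstacle is the Lucas-based factorization: one must carefully track when carries occur upon doubling and verify that $\binom{2k}{k}/4^k \pmod p$ really does factor as a product over digits precisely in the carry-free regime, so that the sum separates cleanly into one factor involving $H_{k_0}$ and $a-1$ identical factors equal to $A$.
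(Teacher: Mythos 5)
Your proposal is correct, and it follows the paper's skeleton for the $a=1$ evaluation while handling the $a\geq 2$ case by a genuinely different, more self-contained route. Like the paper, you reduce $(h/2)^k$ to $4^{-k}$ and the restricted harmonic sum to $H_{k_0}$ modulo $p$, apply Lucas' theorem, and evaluate $\sum_{r}(-1)^r\binom{(p-1)/2}{r}H_r$ by interchanging summations (the paper does exactly this via Chu--Vandermonde, arriving at $\frac{-2}{p-1}\equiv 2\pmod p$, which matches your identity $\sum_{r=0}^{n}(-1)^r\binom{n}{r}H_r=-1/n$ with $n=\frac{p-1}{2}$). The divergence is in how the case $a\geq 2$ is killed: the paper peels off only the last base-$p$ digit, writing $k=ps+t$, so that modulo $p$ the sum factors as $\big(\sum_{s=0}^{p^{a-1}-1}\binom{2s}{s}(h/2)^s\big)\big(\sum_{t=1}^{(p-1)/2}\binom{2t}{t}(h/2)^tH_t\big)$, and then annihilates the first factor by invoking Sun's valuation result, Lemma 2.3 via (\ref{yzhp1}) and (\ref{yzhp137}), which gives divisibility by $p^{a-1}$. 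You instead factor over all $a$ digits (your carry analysis is sound: any digit exceeding $\frac{p-1}{2}$ forces, at the minimal such position, a digit $2k_i-p<k_i$ of $2k$ and hence $\binom{2k}{k}\equiv 0\pmod p$; in the carry-free case Lucas plus Fermat gives the product formula), obtaining $S\equiv B\cdot A^{a-1}\pmod p$ and disposing of $A=\sum_{r=0}^{(p-1)/2}\binom{2r}{r}4^{-r}\equiv\sum_{r}(-1)^r\binom{(p-1)/2}{r}=0\pmod p$ by the binomial theorem alone. This buys elementarity: you avoid importing the cited nontrivial valuation lemma (which the paper needs elsewhere anyway), at the modest cost of the multi-digit Lucas factorization, which you do justify correctly.
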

\begin{proof}
Note that
\begin{align}\label{yzhp11}
\sum_{k=0}^{p^a-1}\binom{2k}{k}\bigg(\frac{h}{2}\bigg)^k\sum_{\substack{j=1\\p\nmid j}}^{k}\frac{1}{j}=\sum_{s=0}^{p^{a-1}-1}\sum_{t=0}^{p-1}\binom{2(ps+t)}{ps+t}\bigg(\frac{h}{2}\bigg)^{ps+t}\sum_{\substack{j=1\\p\nmid j}}^{ps+t}\frac{1}{j}
\end{align}
and
$$\sum_{\substack{j=1\\p\nmid j}}^{ps+t}\frac{1}{j}=\sum_{\substack{j=1\\p\nmid j}}^{ps-1}\frac{1}{j}+\sum_{j=ps+1}^{ps+t}\frac{1}{j}=\sum_{\substack{j=1\\p\nmid j}}^{ps-1}\frac{1}{ps-j}+\sum_{j=1}^{t}\frac{1}{ps+j}\equiv \sum_{j=1}^{t}\frac{1}{j}\pmod{p^{
}}.$$
With the help of Lucas' theorem  (cf. \cite[ p. 44]{RPS}), it follows that 
\begin{align}\notag
\sum_{k=0}^{p^a-1}\binom{2k}{k}\bigg(\frac{h}{2}\bigg)^k\sum_{\substack{j=1\\p\nmid j}}^{k}\frac{1}{j}&\equiv\sum_{s=0}^{p^{a-1}-1}\sum_{t=1}^{\frac{p-1}{2}}\binom{2s}{s}\binom{2t}{t}\bigg(\frac{h}{2}\bigg)^{s+t}\sum_{j=1}^{t}\frac{1}{j}\\\notag
&\ \ +\sum_{s=0}^{p^{a-1}-1}\sum_{t=\frac{p+1}{2}}^{p-1}\binom{2s+1}{s}\binom{2t-p}{t}\bigg(\frac{h}{2}\bigg)^{s+t}\sum_{j=1}^{t}\frac{1}{j}\\\label{yzhp12}
&\equiv \sum_{s=0}^{p^{a-1}-1}\binom{2s}{s}\bigg(\frac{h}{2}\bigg)^{s}\sum_{t=1}^{\frac{p-1}{2}}\binom{2t}{t}\bigg(\frac{h}{2}\bigg)^{t}\sum_{j=1}^{t}\frac{1}{j}\pmod{p^{
}}.
\end{align}
Since  $p\mid (\frac{2}{h}-4)$, in view of (\ref{yzhp1}), then 
\begin{align}\label{yzhp137}\sum_{s=0}^{p^{a-1}-1}\binom{2s}{s}\bigg(\frac{h}{2}\bigg)^{s}\equiv 0\pmod{p^{a-1
}}.
\end{align}
If $a\geq 2$, combining (\ref{yzhp12}) and (\ref{yzhp137}), then (\ref{yzhp10}) holds.

When $a=1.$ In fact, for $k=0,\ldots,\frac{p-1}{2}$ we clearly have

$$\binom{2k}{k}=\binom{-\frac{1}{2}}{k}(-4)^k\equiv \binom{\frac{p-1}{2}}{k}(-4)^k\pmod{p^{
}}.$$ 
Observing that $2h\equiv 1\pmod{p},$ then we obtain 
\begin{align}\notag\sum_{t=1}^{\frac{p-1}{2}}\binom{2t}{t}\bigg(\frac{h}{2}\bigg)^{t}\sum_{j=1}^{t}\frac{1}{j}&\equiv\sum_{t=1}^{\frac{p-1}{2}}\binom{\frac{p-1}{2}}{t}(-2h)^{t}\sum_{j=1}^{t}\frac{1}{j}\\\label{jianjiankunkunyong}&\equiv\sum_{j=1}^{\frac{p-1}{2}}\frac{(-1)^j}{j}\sum_{t=j}^{\frac{p-1}{2}}\binom{\frac{p-1}{2}}{t}\binom{-1}{t-j}\pmod{p^{
}}.
\end{align}
Recall that the Chu-Vandermonde identity (See, e.g., \cite[ p. 169]{RLGDEKOP})
$$\sum_{k=0}^{n}\binom{x}{k}\binom{y}{n-k}=\binom{x+y}{n}.$$
Thus,
\begin{align}\notag\sum_{j=1}^{\frac{p-1}{2}}\frac{(-1)^j}{j}\sum_{t=j}^{\frac{p-1}{2}}\binom{\frac{p-1}{2}}{t}\binom{-1}{t-j}&=\sum_{j=1}^{\frac{p-1}{2}}\frac{(-1)^j}{j}\binom{\frac{p-3}{2}}{j-1}\\\label{yzhp14}&=\frac{2}{p-1}\sum_{j=1}^{\frac{p-1}{2}}\binom{\frac{p-1}{2}}{j}(-1)^j=\frac{-2}{p-1}\equiv 2\pmod{p}.\end{align}
Therefore (\ref{yzhp10}) with $a=1$ is proved by (\ref{yzhp12}), (\ref{jianjiankunkunyong}) and (\ref{yzhp14}).

By the above, we have completed the proof of Lemma 2.6. 
\end{proof}

\section{Proofs of Theorems 1.1 and 1.2}
\setcounter{equation}{0}
\setcounter{Thm}{0}
\setcounter{Lem}{0}
\setcounter{Cor}{0}
\setcounter{Conj}{0}
\begin{proof}[Proof of Theorem 1.1]
We first prove (\ref{jiankun12}). Let $\nu_{p}(n)=a.$ (\ref{jiankun12}) is evidently trivial when $a=0$. Next we suppose that $a\geq 1$.
With the help of (\ref{y71}),  we
have
\begin{align}\notag
\sum_{k=0}^{n-1}\binom{hn-1}{k}\binom{2k}{k}\bigg(-\frac{h}{2}\bigg)^k&\equiv \sum_{k=0}^{n-1}\binom{\frac{n}{p}h-1}{\floor {\frac{k}{p}}}(-1)^{k-\floor {\frac{k}{p}}}\binom{2k}{k}\bigg(-\frac{h}{2}\bigg)^k\\\notag&=\sum_{l=0}^{\frac{n}{p}-1}\binom{\frac{n}{p}h-1}{l}(-1)^{l}\sum_{\floor {\frac{k}{p}}=l}\binom{2k}{k}\bigg(\frac{h}{2}\bigg)^k\pmod{p^a}.
\end{align} 
By (\ref{yzhp5}) and (\ref{yzhp15}), for any odd prime $p$, then we get
$$\sum_{\floor {\frac{k}{p}}=l}\binom{2k}{k}\bigg(\frac{h}{2}\bigg)^k\equiv 0\pmod{p}.$$
Repeating the above process $a-1$ times, we obtain that
\begin{align}\notag
&\sum_{l=0}^{\frac{n}{p}-1}\binom{\frac{n}{p}h-1}{l}(-1)^{l}\sum_{\floor {\frac{k}{p}}=l}\binom{2k}{k}\bigg(\frac{h}{2}\bigg)^k\\\notag&\equiv\sum_{m=0}^{\frac{n}{p^2}-1}\binom{\frac{n}{p^2}h-1}{m}(-1)^{m}\sum_{\floor {\frac{l}{{p^{}}}}=m}\sum_{\floor {\frac{k}{{p^{}}}}=l}\binom{2k}{k}\bigg(\frac{h}{2}\bigg)^k\ \ \ (by\  (\ref{y71}))\\\notag&\equiv\sum_{l=0}^{\frac{n}{p^a}-1}\binom{\frac{n}{p^a}h-1}{l}(-1)^{l}\sum_{\floor {\frac{k}{p^a}}=l}\binom{2k}{k}\bigg(\frac{h}{2}\bigg)^k\equiv 0\pmod{p^a}.
\end{align} 
Assuming that (\ref{jiankun11}) holds, then
we prove (\ref{jiankun12}).

Let us turn to (\ref{jiankun11}). We assume that $p\geq 5$. In view of (\ref{y71}), we obtain
\begin{align}\notag&\sum_{k=0}^{p^a-1}\binom{hp^a-1}{k}\binom{2k}{k}\bigg(-\frac{h}{2}\bigg)^k\\\notag&\equiv\sum_{k=0}^{p^a-1}\binom{hp^{a-1}-1}{\floor {\frac{k}{p}}}(-1)^{k-\floor {\frac{k}{p}}}\bigg(1-hp^a\sum_{\substack{j=1\\p\nmid j}}^{k}\frac{1}{j}\bigg)\binom{2k}{k}\bigg(-\frac{h}{2}\bigg)^k\\\notag&=
\sum_{l=0}^{p^{a-1}-1}\bigg(\binom{hp^{a-1}-1}{l}(-1)^l-1\bigg)\sum_{\floor {\frac{k}{p}}=l}\binom{2k}{k}\bigg(\frac{h}{2}\bigg)^k(1-hp^a\sum_{\substack{j=1\\p\nmid j}}^{k}\frac{1}{j})\\\notag&\ \ +\sum_{l=0}^{p^{a-1}-1}\sum_{\floor {\frac{k}{p}}=l}\binom{2k}{k}\bigg(\frac{h}{2}\bigg)^k(1-hp^a\sum_{\substack{j=1\\p\nmid j}}^{k}\frac{1}{j})\pmod{p^{a+1}}.
\end{align} 
For any positive integer $a,$  we have $$p\mid \bigg(\binom{hp^{a-1}-1}{l}(-1)^l-1\bigg).$$
Therefore,
\begin{align}\notag&\sum_{k=0}^{p^a-1}\binom{hp^a-1}{k}\binom{2k}{k}\bigg(-\frac{h}{2}\bigg)^k\\\notag&\equiv\sum_{l=0}^{p^{a-1}-1}\binom{hp^{a-1}-1}{l}(-1)^l\sum_{\floor {\frac{k}{p}}=l}\binom{2k}{k}\bigg(\frac{h}{2}\bigg)^k\\\label{yzhp16}&\ \ -hp^a\sum_{l=0}^{p^{a-1}-1}\sum_{\floor {\frac{k}{p}}=l}\binom{2k}{k}\bigg(\frac{h}{2}\bigg)^k\sum_{\substack{j=1\\p\nmid j}}^{k}\frac{1}{j}\pmod{p^{a+1}}.
\end{align} 
With the help of Lemma 2.6, for $a\geq 2$ we get
\begin{align}\label{yzhp17}
\sum_{l=0}^{p^{a-1}-1}\sum_{\floor {\frac{k}{p}}=l}\binom{2k}{k}\bigg(\frac{h}{2}\bigg)^k\sum_{\substack{j=1\\p\nmid j}}^{k}\frac{1}{j}=\sum_{k=0}^{p^{a}-1}\binom{2k}{k}\bigg(\frac{h}{2}\bigg)^k\sum_{\substack{j=1\\p\nmid j}}^{k}\frac{1}{j}\equiv 0\pmod{p^{}}.
\end{align} 
If $a=1$, by (\ref{yzhp5}), (\ref{yzhp10}) and (\ref{yzhp16}), then
\begin{align}\notag&\sum_{k=0}^{p^a-1}\binom{hp^a-1}{k}\binom{2k}{k}\bigg(-\frac{h}{2}\bigg)^k\equiv\sum_{k=0}^{p-1}\binom{2k}{k}\bigg(\frac{h}{2}\bigg)^k-2hp\equiv p(1-2h)\equiv 0\pmod{p^{2}}.
\end{align} 
(\ref{jiankun11}) holds with $a=1$ and $p\geq 5$.
If $a\geq 2$, combining (\ref{yzhp16}) and (\ref{yzhp17}), then we have modulo $p^{a+1},$
\begin{align}\notag\sum_{k=0}^{p^a-1}\binom{hp^a-1}{k}\binom{2k}{k}\bigg(-\frac{h}{2}\bigg)^k&\equiv\sum_{l=0}^{p^{a-1}-1}\binom{hp^{a-1}-1}{l}(-1)^l\sum_{\floor {\frac{k}{p}}=l}\binom{2k}{k}\bigg(\frac{h}{2}\bigg)^k\\\label{yzhp20}&\equiv\sum_{l=0}^{p^{a-s}-1}\binom{hp^{a-s}-1}{l}(-1)^l\sum_{\floor {\frac{k}{p^s}}=l}\binom{2k}{k}\bigg(\frac{h}{2}\bigg)^k.
\end{align} 
Repeating this process $s-1$ times, we have
\begin{align}\notag&\sum_{l=0}^{p^{a-s}-1}\binom{hp^{a-s}-1}{l}(-1)^l\sum_{\floor {\frac{k}{p^s}}=l}\binom{2k}{k}\bigg(\frac{h}{2}\bigg)^k\\\notag&\equiv\sum_{m=0}^{p^{a-s-1}-1}\binom{hp^{a-s-1}-1}{m}(-1)^{m}\sum_{\floor {\frac{l}{p^{}}}=m}\sum_{\floor {\frac{k}{p^{s}}}=l}\binom{2k}{k}\bigg(\frac{h}{2}\bigg)^k\\\label{yzhp21}&\ \ -hp^{a-s}\sum_{m=0}^{p^{a-s-1}-1}\sum_{\floor {\frac{l}{p^{}}}=m}\sum_{\substack{j=1\\p\nmid j}}^{l}\frac{1}{j}\sum_{\floor {\frac{k}{p^{s}}}=l}\binom{2k}{k}\bigg(\frac{h}{2}\bigg)^k.
\end{align} 
By (\ref{yzhp5}) and (\ref{yzhp10}), we get 
\begin{align}\notag hp^{a-s}\sum_{l=1}^{p^{a-s}-1}\sum_{\substack{j=1\\p\nmid j}}^{l}\frac{1}{j}\sum_{\floor {\frac{k}{p^{s}}}=l}\binom{2k}{k}\bigg(\frac{h}{2}\bigg)^k&\equiv hp^{a}\sum_{l=1}^{p^{a-s}-1}\frac{\binom{2l}{l}}{4^l}\sum_{\substack{j=1\\p\nmid j}}^{l}\frac{1}{j}\\\notag&\equiv hp^{a}\sum_{l=1}^{p^{a-s}-1}\binom{2l}{l}\bigg(\frac{h}{2}\bigg)^l\sum_{\substack{j=1\\p\nmid j}}^{l}\frac{1}{j}\\\label{yzhp22}&\equiv\begin{cases}2hp^{a}\pmod{p^{a+1}},&\text{if }s=a-1,\\
0\pmod{p^{a+1}},&\text{if}\ s< a-1.\end{cases}
\end{align} 
At last, we only need to think about the case $s=a-1$. From (\ref{yzhp20})-(\ref{yzhp22}), we have
\begin{align}\notag\sum_{k=0}^{p^a-1}\binom{hp^a-1}{k}\binom{2k}{k}\bigg(-\frac{h}{2}\bigg)^k&\equiv \sum_{k=0}^{p^a-1}\binom{2k}{k}\bigg(\frac{h}{2}\bigg)^k-2hp^a\\\notag&\equiv (1-2h)p^a\equiv 0\pmod{p^{a+1}}.
\end{align} 
So we obtain (\ref{jiankun11}) with $a\geq 2$ and $p\geq 5$.
When $a\geq 2$ and $p=3,$
the proof of (\ref{jiankun11}) is very similar with the case $p\geq 5$, only requiring a few additional discussions. 
Without loss of generality, it suffices to study the following sum in (\ref{yzhp21}). Note that
\begin{align}\notag&h3^{a-s}\sum_{m=0}^{3^{a-s-1}-1}\sum_{\floor {\frac{l}{3^{}}}=m}\sum_{\substack{j=1\\3\nmid j}}^{l}\frac{1}{j}\sum_{\floor {\frac{k}{3^{s}}}=l}\binom{2k}{k}\bigg(\frac{h}{2}\bigg)^k\\\notag&=h3^{a-s}\sum_{m=0}^{3^{a-s-1}-1}\sum_{l=3m}^{3m+2}\sum_{\substack{j=1\\3\nmid j}}^{l}\frac{1}{j}\sum_{\floor {\frac{k}{3^{s}}}=l}\binom{2k}{k}\bigg(\frac{h}{2}\bigg)^k\\\notag&\equiv
h3^{a-s}\sum_{m=0}^{3^{a-s-1}-1}\bigg(\sum_{\substack{j=1\\3\nmid j}}^{3m+1}\frac{1}{j}\sum_{\floor {\frac{k}{3^{s}}}=3m+1}\binom{2k}{k}\bigg(\frac{h}{2}\bigg)^k\\\notag&\ \ +\sum_{\substack{j=1\\3\nmid j}}^{3m+2}\frac{1}{j}\sum_{\floor {\frac{k}{3^{s}}}=3m+2}\binom{2k}{k}\bigg(\frac{h}{2}\bigg)^k\bigg)\pmod{3^{a+1}}
\end{align} 
and $$\sum_{\substack{j=1\\3\nmid j}}^{3m+1}\frac{1}{j}=\sum_{\substack{j=1\\3\nmid j}}^{3m-1}\frac{1}{j}+\frac{1}{3m+1}\equiv 1\pmod{3^{}},\ \ \sum_{\substack{j=1\\3\nmid j}}^{3m+2}\frac{1}{j}\equiv1+\frac{1}{2}= \frac{3}{2}\equiv 0\pmod{3^{}}.$$
With the help of (\ref{yzhp8}), we obtain 
\begin{align}\notag&h3^{a-s}\sum_{m=0}^{3^{a-s-1}-1}\sum_{\floor {\frac{l}{3^{}}}=m}\sum_{\substack{j=1\\3\nmid j}}^{l}\frac{1}{j}\sum_{\floor {\frac{k}{3^{s}}}=l}\binom{2k}{k}\bigg(\frac{h}{2}\bigg)^k\\\notag&\equiv h3^{a-s}\sum_{m=0}^{3^{a-s-1}-1}\sum_{\floor {\frac{k}{3^{s}}}=3m+1}\binom{2k}{k}\bigg(\frac{h}{2}\bigg)^k\\\label{yzhp23}&\equiv h3^{a-s}\sum_{m=0}^{3^{a-s-1}-1}\bigg(3^s\frac{\binom{6m+2}{3m+1}}{4^{3m+1}}+23^{s-1}\frac{1-2h}{h}\binom{6m+2}{3m+1}\bigg)\pmod{3^{a+1}}.
\end{align} 
 In light of the Lucas' theorem, 
\begin{align}\label{yzhp24}\frac{\binom{6m+2}{3m+1}}{4^{3m+1}}=\frac{2(6m+1)}{4^{3m+1}(3m+1)}\binom{6m}{3m}\equiv2\binom{2m}{m}\pmod{3^{}}.\end{align}
Recall that Strauss et al. \cite{SMSJZD} proved that for any positive integer $n$,
\begin{align}\label{yzhp25}\nu_{3}\bigg(\sum_{k=0}^{n-1}\binom{2k}{k}\bigg)=2\nu_{3}(n)+\nu_{3}\bigg(\binom{2n}{n}\bigg).
\end{align}
Combining (\ref{yzhp23}), (\ref{yzhp24}) with (\ref{yzhp25}), then we have modulo $\pmod{3^{a+1}},$
\begin{align}\notag& h3^{a-s}\sum_{m=0}^{3^{a-s-1}-1}\sum_{\floor {\frac{l}{3^{}}}=m}\sum_{\substack{j=1\\3\nmid j}}^{l}\frac{1}{j}\sum_{\floor {\frac{k}{3^{s}}}=l}\binom{2k}{k}\bigg(\frac{h}{2}\bigg)^k\\\label{yzhp26}&\ \ \equiv \frac{2(2-h)3^a}{3}\sum_{m=0}^{3^{a-s-1}-1}\binom{2m}{m}\equiv\begin{cases}0,&\text{if }s\leq a-2,\\
\frac{2(2-h)3^a}{3},&\text{if}\ s=a-1.\end{cases}
\end{align}
Since $2h\equiv 1\equiv -2\pmod{3^{}},$ then $h\equiv 2\pmod{3^{}}$.
From (\ref{yzhp20}), (\ref{yzhp21}) and (\ref{yzhp26}), then
\begin{align}\notag\sum_{k=0}^{3^a-1}\binom{h3^a-1}{k}\binom{2k}{k}\bigg(-\frac{h}{2}\bigg)^k&\equiv\sum_{k=0}^{3^{a}-1}\binom{2k}{k}\bigg(\frac{h}{2}\bigg)^k-\frac{2(2-h)3^a}{3}\\\notag&\equiv\frac{(2-h)3^a}{3h}-\frac{2(2-h)3^a}{3}\ \ \ \ (by\ (\ref{yzhp4})) \\\notag&=\frac{(1-2h)(2-h)3^a}{3h}\equiv0\pmod{3^{a+1}}.
\end{align}

The proof of Theorem 1.1 is now complete.
\end{proof}
\begin{proof}[Proof of Theorem 1.2]Let $n=p^{a-1}d$
with $p\nmid d$ and $a\geq 1$. The proof of (\ref{jiankun1177}) is very similar with (\ref{jiankun12}). With the help of (\ref{y71}), for any integer $s\in [0,a-1]$ and nonnegative integer $k$ now we have
\begin{align}\label{47}
(-1)^k\binom{p^{a-s}d-1}{k}-(-1)^{\floor {\frac{k}{p^{}}}}\binom{p^{a-s-1}d-1}{{\floor {\frac{k}{p^{}}}}}\in p^{a-s}d\Z_{p}.
\end{align}
Therefore,
\begin{align}\notag&\frac{1}{d}\bigg(\sum_{k=0}^{p^{a}d-1}\binom{p^{a}d-1}{k}\frac{\binom{2k}{k}}{(-m)^k}-\bigg(\frac{\Delta}{p}\bigg)\sum_{k=0}^{p^{a-1}d-1}\binom{p^{a-1}d-1}{k}\frac{\binom{2k}{k}}{(-m)^k}\bigg)\\\notag
&\equiv\frac{1}{d}\sum_{l=0}^{p^{a-1}d-1}\binom{p^{a-1}d-1}{l}(-1)^l\bigg(\sum_{\floor {\frac{k}{p^{}}}=l}\frac{\binom{2k}{k}}{m^k}-\bigg(\frac{\Delta}{p}\bigg)\frac{\binom{2l}{l}}{m^l}\bigg)\pmod{p^{a}}.
\end{align}
In light of Lemma 2.5 with $s=1$, we have
\begin{align}\notag&\frac{1}{d}\sum_{l=0}^{p^{a-1}d-1}\binom{p^{a-1}d-1}{l}(-1)^l\bigg(\sum_{\floor {\frac{k}{p^{}}}=l}\frac{\binom{2k}{k}}{m^k}-\bigg(\frac{\Delta}{p}\bigg)\frac{\binom{2l}{l}}{m^l}\bigg)\\\notag
=&\frac{1}{d}\sum_{l=0}^{p^{a-1}d-1}\bigg(\binom{p^{a-1}d-1}{l}(-1)^l-\binom{p^{a-2}d-1}{\floor {\frac{l}{p^{}}}}(-1)^{\floor {\frac{l}{p^{}}}}\bigg)\bigg(\sum_{\floor {\frac{k}{p^{}}}=l}\frac{\binom{2k}{k}}{m^k}-\bigg(\frac{\Delta}{p}\bigg)\frac{\binom{2l}{l}}{m^l}\bigg)\\\notag&\ \ +\frac{1}{d}\sum_{l=0}^{p^{a-2}d-1}\binom{p^{a-2}d-1}{l}(-1)^l\bigg(\sum_{k=p^2l}^{p^2l+p^2-1}\frac{\binom{2k}{k}}{m^k}-\bigg(\frac{\Delta}{p}\bigg)\sum_{k=pl}^{pl+p-1}\frac{\binom{2l}{l}}{m^l}\bigg).
\end{align}
By Lemma 2.5 and (\ref{47}) we get
\begin{align*}&\frac{1}{d}\sum_{l=0}^{p^{a-1}d-1}\binom{p^{a-1}d-1}{l}(-1)^l\bigg(\sum_{\floor {\frac{k}{p^{}}}=l}\frac{\binom{2k}{k}}{m^k}-\bigg(\frac{\Delta}{p}\bigg)\frac{\binom{2l}{l}}{m^l}\bigg)\\\notag
&\equiv\frac{1}{d}\sum_{l=0}^{p^{a-2}d-1}\binom{p^{a-2}d-1}{l}(-1)^l\bigg(\sum_{k=p^2l}^{p^2l+p^2-1}\frac{\binom{2k}{k}}{m^k}-\bigg(\frac{\Delta}{p}\bigg)\sum_{k=pl}^{pl+p-1}\frac{\binom{2l}{l}}{m^l}\bigg)\\&
\equiv\frac{1}{d}\sum_{l=0}^{d-1}\binom{d-1}{l}(-1)^l\bigg(\sum_{k=p^al}^{p^al+p^a-1}\frac{\binom{2k}{k}}{m^k}-\bigg(\frac{\Delta}{p}\bigg)\sum_{k=p^{a-1}l}^{p^{a-1}l+p^{a-1}-1}\frac{\binom{2l}{l}}{m^l}\bigg)\equiv 0\pmod{p^{a}},
\end{align*}
where the last result comes from Lemma 2.5. 

In view of the above, we have completed the proof of Theorem 1.2. 
\end{proof}
 
\begin{Ack}
The author would like to thank the referee for helpful comments.
\end{Ack}

\end{document}